\documentclass[12pt]{amsart}

\usepackage{amsmath, enumerate, amsfonts, amssymb, amsthm, graphics, graphicx, verbatim, caption, subcaption, MnSymbol, booktabs, multirow, soul, array, hyperref, float, quiver, adjustbox, comment, todonotes}
\usepackage[numbers]{natbib}
\usepackage[table]{xcolor}
\usepackage[english]{babel}
\usepackage[margin=0.95in]{geometry}
\usepackage{tikz, float}
\usepackage{quiver}
\usepackage{mleftright}
\mleftright
\usetikzlibrary{shapes}
\usetikzlibrary{positioning}
\usetikzlibrary{decorations.markings}
\usetikzlibrary{arrows}
\usepackage{quiver}
\usepackage{tikz-cd}
\tikzstyle{subgroup}=[scale=1]

\newtheorem{theorem}{Theorem}[section]

\newtheorem{prevtheorem}{Theorem}

\newtheorem{corollary}[theorem]{Corollary}
\newtheorem{definition}[theorem]{Definition}

\newtheorem{lemma}[theorem]{Lemma}

\title{The category of centralizer lattices of groups}
\author[Cocke]{William Cocke}
\address{School of Computer and Cyber Sciences, Augusta University, Augusta, GA 30912; \newline \indent
wcocke@augusta.edu \\ \newline
\indent And Carnegie Mellon University, Pittsburgh, PA 15213: wcocke@andrew.cmu.edu}
\author[Lewis]{Mark L. Lewis}
\address{Department of Mathematical Sciences, Kent State University, Kent, OH  44242; \newline \indent lewis@math.kent.edu}
\author[McCulloch]{Ryan McCulloch}
\address{Department of Mathematics and Statistics, Binghamton University, Binghamton, NY 13902; \newline \indent rmccullo1985@gmail.com}
\date{}

\begin{document}

\begin{abstract}
We formalize the concept of a centralizer-respecting homomorphism, surjective homomorphisms which are equivariant with respect to taking the centralizer of a subgroup. There is a functor from the category of centralizer-respecting homomorphisms to the category of centralizer lattices. Finally, we conclude with some theorems about centralizer-respecting homomorphisms that show that the category of centralizer-respecting homomorphisms has many interesting maps.   
\end{abstract}

\keywords{group theory, category theory, centralizer map}

\subjclass[2020]{Primary 20E15, 20D15}

\maketitle

\section{Introduction.}

Within group theory, one encounters the concept of a centralizer, i.e., if $G$ is a group and $X$ a subset of elements of $G$, then we can form the set \[\mathbf{C}_G(X)=\{g\in G:  \forall x \in X, xg = gx\}.\] In words, $\mathbf{C}_G(X)$, called the centralizer of $X$ in $G$, is the set of all elements in $G$ that commute with every element of $X$, and for any $X\subseteq G$, the set $\mathbf{C}_G(X)$ is a subgroup of $G$. Of more interest, the centralizer operator acts on the subgroup lattice of $G$. As an operator, the centralizer has enough entertaining properties to make it an amusing object for further exploration. For example, the operator is antitone (short for anti-monotone), is an involution when restricted to its image, and behaves well with respect to intersections. \begin{flalign*}
     \textbf{(antitone)}&\;\; X \subseteq Y \Rightarrow \mathbf{C}_G(Y) \leq \mathbf{C}_G(X).\\ 
    \textbf{(involution on its image)}& \;\;\mathbf{C}_G(\mathbf{C}_G(\mathbf{C}_G(\cdot))) = \mathbf{C}_G(\cdot).\\ 
    \textbf{(intersection of centralizers)} & \;\;
    \mathbf{C}_G(X) \cap \mathbf{C}_G(Y) = \mathbf{C}_G(\langle X, Y\rangle). 
\end{flalign*}

Note that the notation $\langle X,Y \rangle$ means the subgroup generated by all elements in $X$ and $Y$.

The poset of all centralizer of a group $G$ forms the so-called centralizer lattice $\mathcal{C}(G)$ (see Definition \ref{def: cent-lat}). 

It is well-known that the centralizer behaves poorly with respect to homomorphisms, that is, for groups $G$ and $H$ with a homomorphism $\phi:G\to H$, it is often the case that $\phi(\mathbf{C}_G(A)) \neq \mathbf{C}_H(\phi(A))$ where $A$ is a subgroup of $G$. For example, if $G$ is nonabelian and $H$ is abelian then $\mathbf{C}_H(\phi(A))=H$ for all subgroups $A\leq G$. Since the centralizer does not respect homomorphisms, it is difficult to study in a categorical setting. This paper reconciles the centralizer operator and homomorphisms by formalizing the concept of centralizer-respecting homomorphisms. Moreover, we show that there is a pleasing categorical view of these homomorphisms by exhibiting a functor from centralizer-respecting homomorphisms to centralizer lattices. 

\begin{definition}[centralizer-respecting homomorphisms]
    A surjective group homomorphism {$\phi:G\rightarrow H$} is \textbf{centralizer-respecting} if for  subgroups $A\leq G$ we have $\phi(\mathbf{C}_G(A)) = \mathbf{C}_H(\phi(A))$.
\end{definition}

We show that centralizer-respecting homomorphisms induce maps on the corresponding centralizer lattices.  For a centralizer-respecting homomorphism $\phi$, we write $\mathcal{C}_\phi$ for the map that takes a centralizer $\mathbf{C}_G(X)$ to $ \mathbf{C}_H(\phi(X)) = \phi(\mathbf{C}_G(X))$. 

\begin{prevtheorem}\label{prevtheorem: hom}
    If $\phi$ is a centralizer-respecting homomorphism from $G$ to $H$, then the map $\mathcal{C}_\phi$ is a homomorphism of the centralizer lattices $\mathcal{C}(G) \rightarrow \mathcal{C}(H)$. 
\end{prevtheorem}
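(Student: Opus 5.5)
We first check that $\mathcal{C}_\phi$ is well defined and lands in $\mathcal{C}(H)$, and then verify that it preserves meet and join. The lattice $\mathcal{C}(G)$ is ordered by inclusion. Its meet is intersection, since $\mathbf{C}_G(X)\cap\mathbf{C}_G(Y)=\mathbf{C}_G(\langle X,Y\rangle)$. Its join is $\mathbf{C}_G(X)\vee\mathbf{C}_G(Y)=\mathbf{C}_G(\mathbf{C}_G(X)\cap\mathbf{C}_G(Y))$, which is the smallest centralizer containing both, by the antitone and involution properties.

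\textbf{Well defined.} A centralizer $K\in\mathcal{C}(G)$ may be written as $\mathbf{C}_G(X)$ for many $X$. We define $\mathcal{C}_\phi(K)=\phi(K)$, since this depends only on $K$. For subsets we use $\mathbf{C}_G(X)=\mathbf{C}_G(\langle X\rangle)$ and $\phi(\langle X\rangle)=\langle\phi(X)\rangle$. Together with the centralizer-respecting hypothesis (stated for subgroups) this gives $\phi(K)=\mathbf{C}_H(\phi(\langle X\rangle))=\mathbf{C}_H(\phi(X))\in\mathcal{C}(H)$. Order preservation is then immediate.

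\textbf{Meets.} We need $\phi(\mathbf{C}_G(X)\cap\mathbf{C}_G(Y))=\phi(\mathbf{C}_G(X))\cap\phi(\mathbf{C}_G(Y))$. Images of intersections are not intersections of images in general, so we route through the intersection formula. The left side is $\phi(\mathbf{C}_G(\langle X,Y\rangle))=\mathbf{C}_H(\phi(\langle X,Y\rangle))=\mathbf{C}_H(\langle\phi(X),\phi(Y)\rangle)$. By the intersection property in $H$, this equals $\mathbf{C}_H(\phi(X))\cap\mathbf{C}_H(\phi(Y))$, which is the right side.

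\textbf{Joins.} Set $A=\mathbf{C}_G(X)$ and $B=\mathbf{C}_G(Y)$. Then
\[
\phi(A\vee B)=\phi\bigl(\mathbf{C}_G(A\cap B)\bigr)=\mathbf{C}_H\bigl(\phi(A\cap B)\bigr).
\]
The meet step gives $\phi(A\cap B)=\phi(A)\cap\phi(B)$. Hence
\[
\phi(A\vee B)=\mathbf{C}_H\bigl(\phi(A)\cap\phi(B)\bigr)=\mathbf{C}_H\bigl(\mathbf{C}_H(\phi(X))\cap\mathbf{C}_H(\phi(Y))\bigr)=\phi(A)\vee\phi(B).
\]
The last equality is the join formula in $\mathcal{C}(H)$. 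Preservation of the top and bottom elements also follows: $\phi(G)=H$ by surjectivity, which is the top $\mathbf{C}_H(1)$, and $\phi(\mathbf{Z}(G))=\phi(\mathbf{C}_G(G))=\mathbf{C}_H(H)$.

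\textbf{Main obstacle.} The substantive step is the meet. It is where the hypothesis must be applied to the generated subgroup $\langle X,Y\rangle$ rather than to $X$ and $Y$ separately, and where we must avoid assuming that images commute with intersections. Once meets are established, joins follow formally from the description of the join as a double centralizer. The other point needing care is the correct description of the join in $\mathcal{C}(G)$: the join is not the subgroup $\langle A,B\rangle$, and we should confirm it matches Definition~\ref{def: cent-lat}.
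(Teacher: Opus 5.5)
Your well-definedness check and your meet computation are correct and match the paper's calculation. The join step, however, uses a wrong formula. You assert $\mathbf{C}_G(X)\vee\mathbf{C}_G(Y)=\mathbf{C}_G(\mathbf{C}_G(X)\cap\mathbf{C}_G(Y))$, i.e.\ $A\vee B=\mathbf{C}_G(A\cap B)$. That is the centralizer of the meet, not the join, and it need not contain $A$. With $A=B$ it would give $A\vee A=\mathbf{C}_G(A)$. For nonabelian $G$ and $A=B=G=\mathbf{C}_G(1)$ it gives $\mathbf{Z}(G)\neq G$. The smallest centralizer containing $A$ and $B$ is the double centralizer $\mathbf{C}_G(\mathbf{C}_G(\langle A,B\rangle))=\mathbf{C}_G(\mathbf{C}_G(A)\cap\mathbf{C}_G(B))$. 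This is what Definition~\ref{def: cent-lat} says; note the double centralizers inside it, which answers the check you flagged as your ``main obstacle.'' You use the same wrong formula in $H$, so your chain is internally consistent. What it actually proves is the true identity $\phi(\mathbf{C}_G(A\cap B))=\mathbf{C}_H(\phi(A)\cap\phi(B))$, which is not join preservation.

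The repair is short: apply the hypothesis twice,
\[
\phi\bigl(\mathbf{C}_G(\mathbf{C}_G(\langle A,B\rangle))\bigr)=\mathbf{C}_H\bigl(\phi(\mathbf{C}_G(\langle A,B\rangle))\bigr)=\mathbf{C}_H\bigl(\mathbf{C}_H(\langle\phi(A),\phi(B)\rangle)\bigr)=\phi(A)\vee_\mathbf{C}\phi(B).
\]
You also never verify the involution condition in the paper's definition of a centralizer lattice homomorphism, namely $\mathcal{C}_\phi(\mathbf{C}_G(K))=\mathbf{C}_H(\mathcal{C}_\phi(K))$ for $K\in\mathcal{C}(G)$. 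It is immediate from the hypothesis applied to the subgroup $K$, and it is the first thing the paper checks. The paper then proves meet preservation and infers joins from the fact that $\vee_\mathbf{C}$ is built from $\wedge_\mathbf{C}$ and the involution. The corrected display above is the explicit form of that inference.
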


Moreover, more importantly, and more excitingly, the induced map is functorial. Once we properly define the category $\mathbf{Groups}_{\mathbf{{crh}}}$ of groups with centralizer-respecting homomorphisms and the category $\mathbf{CentLattices}$ of centralizer lattices in Section \ref{sec: categories}, we show that the map $\mathcal{C}$ given by $\mathcal{C}(G) = \mathcal{C}(G)$ and $\mathcal{C}(\phi)=\mathcal{C}_\phi$ is a functor.

\begin{prevtheorem}\label{prevtheorem: functor} The map $\mathcal{C}$ where $\mathcal{C}(G) = \mathcal{C}(G)$ and $\mathcal{C}(\phi) =\mathcal{C}_{\phi}$ is a functor from $\mathbf{Groups}_{\mathbf{crh}}$ to $\mathbf{CentLattices}.$    
\end{prevtheorem}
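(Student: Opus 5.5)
To prove that $\mathcal{C}$ is a functor from $\mathbf{Groups}_{\mathbf{crh}}$ to $\mathbf{CentLattices}$, I will check four things in turn: the objects land correctly, the morphisms land correctly, identities are preserved, and composition is preserved.

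\textbf{Objects and morphisms.} For each group $G$, the assignment $G \mapsto \mathcal{C}(G)$ lands in $\mathbf{CentLattices}$ by definition of that category. For a centralizer-respecting $\phi\colon G\to H$, Theorem~\ref{prevtheorem: hom} says that $\mathcal{C}_\phi\colon \mathcal{C}(G)\to\mathcal{C}(H)$ is a lattice homomorphism, so it is a morphism of $\mathbf{CentLattices}$. I would first confirm that $\mathcal{C}_\phi$ is well defined. If $\mathbf{C}_G(X)=\mathbf{C}_G(Y)$, then
\[
\mathbf{C}_H(\phi(X))=\phi(\mathbf{C}_G(X))=\phi(\mathbf{C}_G(Y))=\mathbf{C}_H(\phi(Y)),
\]
so the image does not depend on the chosen $X$. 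Here one passes from a subset $X$ to $\langle X\rangle$, using $\mathbf{C}_G(X)=\mathbf{C}_G(\langle X\rangle)$ and $\phi(\langle X\rangle)=\langle\phi(X)\rangle$. The upshot is that $\mathcal{C}_\phi(K)=\phi(K)$ for every centralizer $K\in\mathcal{C}(G)$: the map is simply the image map restricted to centralizers.

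\textbf{Identities.} The identity $\mathrm{id}_G$ is surjective, and $\mathrm{id}_G(\mathbf{C}_G(A))=\mathbf{C}_G(A)=\mathbf{C}_G(\mathrm{id}_G(A))$, so it is centralizer-respecting. Then $\mathcal{C}_{\mathrm{id}_G}(K)=K$ for every $K$, hence $\mathcal{C}_{\mathrm{id}_G}=\mathrm{id}_{\mathcal{C}(G)}$.

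\textbf{Composition.} Let $\phi\colon G\to H$ and $\psi\colon H\to L$ be centralizer-respecting. First, $\psi\circ\phi$ is surjective, and for $A\le G$ we have
\[
\psi\phi(\mathbf{C}_G(A))=\psi(\mathbf{C}_H(\phi(A)))=\mathbf{C}_L(\psi\phi(A)),
\]
where the second equality applies the defining property of $\psi$ to the subgroup $\phi(A)\le H$. So $\psi\circ\phi$ is again centralizer-respecting, which makes $\mathbf{Groups}_{\mathbf{crh}}$ a genuine category. Using the description $\mathcal{C}_\phi(K)=\phi(K)$, we then get
\[
\mathcal{C}_{\psi\phi}(K)=\psi(\phi(K))=\mathcal{C}_\psi(\mathcal{C}_\phi(K)).
\]
This uses that $\phi(K)$ is itself a centralizer in $H$, namely $\mathbf{C}_H(\phi(X))$ when $K=\mathbf{C}_G(X)$, so $\mathcal{C}_\psi$ is defined on it. Hence $\mathcal{C}_{\psi\circ\phi}=\mathcal{C}_\psi\circ\mathcal{C}_\phi$.

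\textbf{Main obstacle.} The verifications above are routine. The only point needing care is well-definedness, which reduces to identifying $\mathcal{C}_\phi$ with the image map $K\mapsto\phi(K)$. That identification removes any dependence on the representative $X$ and makes the composition law immediate.
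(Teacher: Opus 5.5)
Your proof is correct and takes essentially the same route as the paper. Objects land correctly by definition, arrows land correctly by Theorem~\ref{prevtheorem: hom}, $\mathcal{C}_{1_G}$ is the identity, and composition follows from $\mathcal{C}_\psi(\mathcal{C}_\phi(\mathbf{C}_G(X)))=\psi(\phi(\mathbf{C}_G(X)))=\mathcal{C}_{\psi\circ\phi}(\mathbf{C}_G(X))$, while closure of centralizer-respecting maps under composition is handled separately in Lemma~\ref{lem: crh_category}. Your explicit well-definedness check, which identifies $\mathcal{C}_\phi$ with $K\mapsto\phi(K)$, is a useful addition the paper leaves implicit; note also that arrows of $\mathbf{CentLattices}$ must respect the centralizer involution, not just meets and joins, and Theorem~\ref{prevtheorem: hom} does supply this.
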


After proving Theorem \ref{prevtheorem: hom} and Theorem \ref{prevtheorem: functor} we address the important question of ``are there interesting centralizer-respecting homomorphisms?'', equivalently ``are there enough homomorphisms in the category $\mathbf{Groups}_{\mathbf{crh}}$ to explore?'' In Section \ref{sec: applications} we present a family of centralizer-respecting homomorphisms and show how to construct new centralizer-respecting homomorphisms from existing ones. 

\begin{prevtheorem}\label{prevtheorem: iff}
    Let $\phi:G\rightarrow K$ be a surjective homomorphism with a central kernel. The following are equivalent.
    \begin{enumerate}
        \item The homomorphism $\phi$ is centralizer-respecting.
        \item The kernel of $\phi$ does not contain any nontrivial commutators, elements of the form $[x,y] \neq 1$.
    \end{enumerate} 
    Moreover, if such a homomorphism $\phi$ is centralizer-respecting, then the induced map $\mathcal{C}_{\phi}$ is an isomorphism of centralizer lattices. 
\end{prevtheorem}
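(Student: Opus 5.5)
Let $N=\ker\phi\le \mathbf{Z}(G)$. The plan is to reduce everything to one observation. For $A\le G$ we always have $\phi(\mathbf{C}_G(A))\le \mathbf{C}_K(\phi(A))$. Conversely, $\phi(g)\in \mathbf{C}_K(\phi(A))$ exactly when $[g,a]\in N$ for all $a\in A$. So centralizer-respecting amounts to this: whenever $[g,a]\in N$ for all $a\in A$, some $gn$ with $n\in N$ centralizes $A$. Since $N$ is central, $[gn,a]=[g,a]$, so multiplying by elements of $N$ does not change the commutators. The condition therefore becomes: $\mathbf{C}_K(\phi(A))=\phi(\mathbf{C}_G(A))$ if and only if every $g$ with $[g,A]\subseteq N$ already satisfies $[g,A]=1$.

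\textbf{(2)$\Rightarrow$(1).} If $N$ contains no nontrivial commutators, then $[g,a]\in N$ forces $[g,a]=1$. Hence $g\in\mathbf{C}_G(A)$, which gives the reverse inclusion, and $\phi$ is centralizer-respecting.

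\textbf{(1)$\Rightarrow$(2).} Suppose $1\neq[x,y]\in N$. Take $A=\langle y\rangle$. Then $\phi(x)$ commutes with $\phi(y)$, so $\phi(x)\in\mathbf{C}_K(\phi(A))=\phi(\mathbf{C}_G(A))$. Thus $x n$ commutes with $y$ for some $n\in N$. But $[xn,y]=[x,y]\neq 1$ because $n$ is central, a contradiction.

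\textbf{Isomorphism.} Assume $\phi$ is centralizer-respecting. By Theorem~\ref{prevtheorem: hom}, $\mathcal{C}_\phi$ is a lattice homomorphism. It is surjective because every centralizer in $K$ has the form $\mathbf{C}_K(Y)=\mathbf{C}_K(\phi(\phi^{-1}(Y)))=\phi(\mathbf{C}_G(\phi^{-1}(Y)))$. For injectivity, the key fact is that every centralizer $C=\mathbf{C}_G(X)$ contains $\mathbf{Z}(G)\supseteq N$, so $C=\phi^{-1}(\phi(C))$. Therefore $\phi(C_1)=\phi(C_2)$ implies $C_1=C_2$, and $\mathcal{C}_\phi$ is a bijective lattice homomorphism, hence an isomorphism. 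One should also check that the inverse preserves order; this holds because $C\mapsto\phi^{-1}(C)$ is monotone.

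The main subtle point I expect is making sure $\mathcal{C}_\phi$ is well defined on centralizers given as $\mathbf{C}_G(X)$ for arbitrary subsets $X$. I would handle it by replacing $X$ with $\langle X\rangle$, since $\mathbf{C}_G(X)=\mathbf{C}_G(\langle X\rangle)$; this is presumably already covered by Theorem~\ref{prevtheorem: hom}. Everything else is the central-kernel commutator identity $[gn,a]=[g,a]$.
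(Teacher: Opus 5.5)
Your proposal is correct and takes essentially the paper's route. Both directions of the equivalence rest on the identity $[gn,a]=[g,a]$ for $n$ in the central kernel: the paper argues (2)$\Rightarrow$(1) by contradiction where you argue directly, and uses $A=\langle a\rangle$ just as you use $A=\langle y\rangle$. For the isomorphism claim, the paper's preceding lemma uses the same surjectivity argument, and the same injectivity argument that every centralizer contains the central kernel, so $\phi(C_1)=\phi(C_2)$ forces $C_1=C_2$.
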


Theorem \ref{prevtheorem: iff} provides a number of examples of centralizer-respecting homomorphisms. For example, consider a group with exactly one non-commutator as identified by the recent work of Hatem and Sinoira \cite{hatem2026group}, or the family of such groups constructed by Skresanov \cite{skresanov2025finite}: the homomorphism corresponding to taking the quotient by the subgroup generated by the non-commutator is a centralizer-respecting homomorphism. Theorem \ref{prevtheorem: iff} also provides a most entertaining corollary as an example of application of the functor $\mathcal{C}$. 

\begin{corollary}\label{cor: iff}
    Fix a positive integer $n>2$. Let $G$ and $H$ be maximal class 2-groups of order $2^n$. Then the centralizer lattices of $G$ and $H$ are isomorphic.  
\end{corollary}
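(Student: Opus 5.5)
Maximal class 2-groups of order $2^n$ with $n>3$ are the dihedral, semidihedral and generalized quaternion groups; for $n=3$ they are $D_8$ and $Q_8$. The plan is to show that each such group $G$ has the same centralizer lattice as the dihedral group $D_{2^n}$. To do this, I will compare all of them to a common quotient, $G/\mathbf{Z}(G)\cong D_{2^{n-1}}$. (For $n=3$ this quotient is the Klein four-group.) The natural idea is to apply Theorem \ref{prevtheorem: iff} to the projection $G\to G/\mathbf{Z}(G)$, since it is surjective with central kernel $\mathbf{Z}(G)$ of order $2$. The snag is that $\mathbf{Z}(G)$ equals $G'\cap \mathbf{Z}(G)$ and its generator $z$ \emph{is} a commutator: $z=[a,b]^{2^{n-3}}$-type elements, and indeed $z=[a^{2^{n-3}},b]$ for the rotation $a$ and a reflection $b$. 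So condition (2) fails, and the quotient map is not centralizer-respecting. I therefore do not expect the direct approach to work, and need a different common object.

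The better route is a \emph{common cover}. I will look for a group $E$ with central subgroups $N_1,N_2$ of order $2$ such that $E/N_1\cong G$ and $E/N_2\cong H$, and neither $N_i$ contains a nontrivial commutator. Theorem \ref{prevtheorem: iff} then gives $\mathcal{C}(E)\cong\mathcal{C}(G)$ and $\mathcal{C}(E)\cong\mathcal{C}(H)$, whence the result. Concretely, take $E=G\times C_2$ when comparing $G$ with $D_{2^n}$, or more uniformly a central product construction.

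Here is the concrete candidate. Let $E=\langle a,b,c : a^{2^{n-1}}=1,\ c^2=1,\ c \text{ central},\ b^2=c^{\epsilon},\ a^b=a^{-1}\rangle$, a group of order $2^{n+1}$. The quotient $E/\langle c\rangle$ is dihedral. The quotient $E/\langle z c\rangle$, with $z=a^{2^{n-2}}$, identifies $b^2=c=z$, which gives the quaternion group. Similar presentations using $a^b=a^{-1+2^{n-2}}$ give the semidihedral case. One must check that $\langle c\rangle$ and $\langle zc\rangle$ contain no nontrivial commutators. This holds because $E'=\langle a^2\rangle$, so $c,zc\notin E'$.

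The main steps, in order, are:
\begin{enumerate}
\item Write down these covering groups for each pair among $D$, $SD$, $Q$.
\item Verify the quotients and that the kernels are central.
\item Compute $E'\subseteq\langle a\rangle$ and check that the kernels meet the set of commutators trivially.
\item Apply Theorem \ref{prevtheorem: iff} twice and compose the lattice isomorphisms, using functoriality (Theorem \ref{prevtheorem: functor}) only to know that these are isomorphisms in $\mathbf{CentLattices}$.
\end{enumerate}

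The expected obstacle is the semidihedral case, where the relation $a^b=a^{-1+2^{n-2}}$ must be matched with a dihedral or quaternion relation in a single $E$. The defining automorphisms of $\langle a\rangle$ differ, so one cannot just vary $b^2$. The first thing to try is a cover where $\langle a\rangle$ is enlarged to $\langle a\rangle\times\langle c\rangle$, with $b$ acting by $a\mapsto a^{-1}c$. Modulo $c$ this gives dihedral; modulo $zc$ it gives $a\mapsto a^{-1}z=a^{-1+2^{n-2}}$, which is semidihedral. One then checks again that $c$ and $zc$ are not commutators. Since $[a,b]=a^{-2}c$ here, the commutator set must be computed carefully. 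If that fails, I would choose a different lift, for example one in which $c$ is a square.
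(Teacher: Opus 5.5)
Your strategy is the paper's: take a central cover of order $2^{n+1}$ with two kernels of order $2$ that contain no nontrivial commutators, then apply Theorem~\ref{prevtheorem: iff} twice. Your dihedral--quaternion cover (with $\epsilon=1$, so $b^4=1$) is exactly the group of Lemma~\ref{lem: D_Q}.

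You diverge from the paper in the semidihedral case, and there your route is the one that works. The paper (Lemma~\ref{lem: Q_SD}) links $Q_{2^n}$ and $SD_{2^n}$ through $\langle x,y\mid x^{2^{n-1}}=y^4=1,\ x^y=x^{2^{n-2}-1}\rangle$, modulo $\langle y^2\rangle$ and $\langle x^{2^{n-2}}y^2\rangle$. That is precisely the ``vary $b^2$'' idea you rejected. Since $y$ induces an involutory automorphism, $(xy)^2=x^{2^{n-2}}y^2$. So in the second quotient $\bar x\bar y$ is an involution outside $\langle\bar x\rangle$ acting by $\bar x\mapsto\bar x^{2^{n-2}-1}$. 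That quotient is therefore semidihedral again, not quaternion; the paper's computation of $\bar z^{\bar y}$ loses a factor $\bar y^2$.

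Your cover $E=(\langle a\rangle\times\langle c\rangle)\rtimes\langle b\rangle$ with $a^b=a^{-1}c$ lets the two quotients induce genuinely different automorphisms of $\langle\bar a\rangle$. That is what separating $D$ (or $Q$) from $SD$ requires. With $b^2=a^{2^{n-2}}$ instead of $b^2=1$, the same construction also yields the $Q$--$SD$ cover the paper was after.

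You left the decisive verification open, and it succeeds. Take $b^2=1$.
\begin{itemize}
\item Since $N=\langle a,c\rangle$ is abelian of index $2$, the map $u\mapsto[u,b]$ is a homomorphism $N\to N$ whose image is normal in $E$ with abelian quotient. Hence $E'=\{[a^ic^j,b]\}=\{a^{-2i}c^{i}\}$.
\item The equation $a^{-2i}c^i=c$ forces $i$ odd with $2^{n-2}\mid i$. The equation $a^{-2i}c^i=zc$ forces $i$ odd with $i\equiv 2^{n-3}\pmod{2^{n-2}}$. Both are impossible for $n\ge 4$.
\item Also $z^b=(a^{-1}c)^{2^{n-2}}=z$, so both kernels are central.
\item Finally, $\langle a\rangle$ meets each kernel trivially, so $\bar a$ has order $2^{n-1}$ in both quotients.
\end{itemize}

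Drop the two stray suggestions. First, $E=G\times C_2$ cannot work: its only central subgroups of order $2$ avoiding commutators are $\langle c\rangle$ and $\langle zc\rangle$, and both quotients are isomorphic to $G$. Second, ``similar presentations using $a^b=a^{-1+2^{n-2}}$'' fail for the reason given above.
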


\begin{proof}
    As we show in Lemmata \ref{lem: D_Q} and \ref{lem: Q_SD}, any two nonisomorphic maximal class 2-groups $G$ and $H$ of order $2^n$ occur as quotients of a group $J$ of order $2^{n+1}$ by central subgroups $K$ and $I$ such that $K$ and $I$ do not contain any nontrivial elements of the form $[x,y]$. Hence $\mathcal{C}(J)\cong \mathcal{C}(G)$ and $\mathcal{C}(J)\cong \mathcal{C}(H)$. We conclude that $\mathcal{C}(G)\cong \mathcal{C}(H)$.
\end{proof}

The remainder of the paper proceeds as follows. In Section \ref{sec: groups/lattices} we provide some of the basics of group theory and lattice theory. This lets us prove Theorem \ref{prevtheorem: hom}. Next, we 
prove Theorem \ref{prevtheorem: functor} in Section \ref{sec: categories}. Finally, in Section \ref{sec: applications} we explore some of the homomorphisms that are in $\mathbf{Groups}_{\mathbf{crh}}$. 

\section{Groups and lattices.}\label{sec: groups/lattices}

Given a group, we can identify all of its subgroups into a poset. This poset is actually a lattice, i.e., a partially ordered set with a top $\top$ and bottom $\bot$ satisfying the rules of the following definition.

\begin{definition}[lattice]
    A \textbf{lattice} $L$ is a partially ordered set together with two distinguished elements $\top$ and $\bot$ such that for all $x \in L$ we have $x\leq \top$ and for all $x\in L$ we have $\bot \leq x$. There are two binary operations on $L$ known as meet, written $\wedge$, and join, written $\vee$; where 
    \begin{itemize} 
    \item $x \wedge y$ is the largest element $z$ of $L$ such that $z \leq x$ and $z\leq y$.
    \item $x \vee y$ is the smallest element $z$ of $L$ such that $x\leq z$ and $y \leq z$. 
    \end{itemize}
\end{definition}

\begin{definition}[lattice homomorphism]
    Given lattices $L, M$, a map $f:L\rightarrow M$ is a \textbf{lattice homomorphism} if $f(x) \wedge f(y) = f(x\wedge y)$ and $f(x) \vee f(y) = f(x\vee y)$. 
\end{definition}

For a group $G$, the set of all subgroups of $G$ form a lattice where $\leq$ is $\subseteq$ (inclusion as a subset), $\wedge = \cap$ (intersection) and $\vee = \langle \cdot \rangle$ (subgroup generated by), and $\top=G$ and $\bot = 1$. There is a rich history of investigating groups via their subgroup lattices, for example, the book by Schmidt \cite{schmidt1994subgroup} or the book by Suzuki \cite{suzuki2012structure}. More modern work on subgroup lattices tends to focus on specific subgroup lattices, e.g., the so-called Chermak--Delgado lattice \cite{wilcox2016exploring,lu2026finite} or other more exotic subgroup lattices \cite{BALLESTER-BOLINCHES_KAMORNIKOV_SHEMETKOVA_2026}.

For a group $G$, the poset of all of its centralizers form a lattice where the meet is intersection (as above) and the join is defined using the involution $\mathbf{C}_G(\cdot)$ as follows. 

\begin{definition}\label{def: cent-lat}(centralizer lattice)
    For a group $G$, let the \textbf{centralizer lattice} of $G$, written as $\mathcal{C}(G)$, consist of the poset of subgroups of $G$ of the form $\mathbf{C}_G(X)$ for some $X\subseteq G$, together with $\wedge_\mathbf{C}$ and $\vee_\mathbf{C}$, and an involution $\mathbf{C}_G(\cdot)$, where for all $X,Y \leq G$ we have \begin{flalign*}
        &\mathbf{C}_G(X)\wedge_\mathbf{C}  \mathbf{C}_G(Y) =\mathbf{C}_G(\langle X,Y\rangle) \\ 
        &\mathbf{C}_G(X) \vee_\mathbf{C}  \mathbf{C}_G(Y)  =\mathbf{C}_G\left(\mathbf{C}_G(\mathbf{C}_G(X)) \wedge_\mathbf{C} \mathbf{C}_G(\mathbf{C}_G(Y))\right)\\  &\mathbf{C}_G(\mathbf{C}_G(\mathbf{C}_G(X) =\mathbf{C}_G(X). \end{flalign*} 
\end{definition}

We use the notation $\wedge_{\mathbf{C}}, \vee_{\mathbf{C}}$ to distinguish from the operations $\wedge$ and $\vee$ on the subgroup lattice of $G$. Antonov \cite{antonov1994groups} has investigated groups where the operations coincide. For centralizers, $\mathbf{C}_G(X)\vee_\mathbf{C} \mathbf{C}_G(Y)$ is also the smallest centralizer in $G$ containing both $\mathbf{C}_G(X)$ and $\mathbf{C}_G(Y)$. While $\wedge_\mathbf{C}$ is just the restriction of $\wedge$ to inputs in $\mathcal{C}(G)$, it is not generally the case that \[\mathbf{C}_G(X) \vee_\mathbf{C} \mathbf{C}_G(Y) = \langle \mathbf{C}_G(X), \mathbf{C}_G(Y)\rangle = \mathbf{C}_G(X) \vee \mathbf{C}_G(Y).\]  

The centralizer lattice of a group is not a sublattice of the subgroup lattice because, as noted in Definition \ref{def: cent-lat}, the join operation is not the same as the join operation for the group itself. In addition, the centralizer lattice of a group $G$ has additional structure, i.e., the involution given by taking $\mathbf{C}_G(\cdot)$. 

\begin{definition}[centralizer lattice homomorphism]
    Given two centralizer lattices $\mathcal{C}(G)$ and $\mathcal{C}(H)$ a \textbf{centralizer lattice homomorphism} is a lattice homomorphism $f$ from $\mathcal{C}(G)$ to $\mathcal{C}(H)$ (over $\wedge_\mathbf{C}$ and $\vee_\mathbf{C}$)  that respects the centralizer involution, i.e., $f(\mathbf{C}_G(\mathbf{C}_G(X)) = \mathbf{C}_H(f(\mathbf{C}_G(X))$ for all $X \leq G$. 
\end{definition}

We now prove Theorem \ref{prevtheorem: hom} that for a centralizer-respecting homomorphism $\phi:G\to H$ the induced map $\mathcal{C}_\phi$ is a centralizer lattice homomorphism of $\mathcal{C}(G)\rightarrow \mathcal{C}(H)$. 

\begin{proof}[Proof of \,$\mathbf{Theorem}$ $\mathbf{A}$]
    The centralizer lattice is generated by the centralizer operator and either meet or joins. Hence to show that $\mathcal{C}_\phi$ is a homomorphism of centralizers lattices, we must show that $\mathbf{C}_{\phi}(\mathbf{C}_G(\cdot)) = \mathbf{C}_H(\mathcal{C}_{\phi}(\cdot))$ and that $\mathcal{C}_\phi$ respects either meets of joins. 
    
    For any $X\leq G$ we have  \[\mathcal{C}_{\phi}(\mathbf{C}_G(\mathbf{C}_G(X)) = \phi(\mathbf{C}_G(\mathbf{C}_G(X))) = \mathbf{C}_H(\phi(\mathbf{C}_G(X))) = \mathbf{C}_H(\mathcal{C}_{\phi}(\mathbf{C}_G(X))),\] and conclude that $\mathcal{C}_\phi$ preserves the centralizer operator. Chasing the diagram in Figure \ref{fig:hom_map} shows that $\mathcal{C}_\phi$ preserves meets (intersections) and is thus a lattice homomorphism. To chase the diagram consider the explicit calculation.  

\begin{figure}
    \centering
        \begin{tikzpicture}[
  node distance=2.5cm and 3.5cm,
  every node/.style={font=\small},
  arrow/.style={->, thick},
  dashedarrow/.style={->, dashed}
]

\node (X) {$\mathbf{C}_G(X)$};
\node (Y) [right=of X] {$\mathbf{C}_G(Y)$};

\node (XY) [below=of $(X)!0.5!(Y)$] 
  {$\mathbf{C}_G(X)\cap \mathbf{C}_G(Y)=\mathbf{C}_G(\langle X,Y\rangle)$};

\node (phiX) [right=.5cm of XY, yshift=1.5cm] 
  {$\phi(\mathbf{C}_G(X))=\mathbf{C}_H(\phi(X))$};

\node (phiY) [right=6cm of XY, yshift=1.5cm] 
  {$\phi(\mathbf{C}_G(Y))=\mathbf{C}_H(\phi(Y))$};

\node (phiXY) [below=2.5cm of XY] 
  {$\phi(\mathbf{C}_G(\langle X,Y\rangle))$};

\node (CphiXY) [right=-.1cm of phiXY] 
  {$    = \mathbf{C}_H(\phi(\langle X,Y\rangle)) 
    = \mathbf{C}_H(\langle \phi(X),\phi(Y)\rangle)=$};
    
\node (phiInt) [right=-.1cm of CphiXY]
  {$\mathbf{C}_H(\phi(X)) \cap \mathbf{C}_H(\phi(Y))$};

\draw[arrow] (X) -- (XY);
\draw[arrow] (Y) -- (XY);

\draw[arrow] (phiX) -- (phiInt);
\draw[arrow] (phiY) -- (phiInt);

\draw[dashedarrow, bend left=6] (X) to node[pos=.35, above] {$\mathcal{C}_\phi$} (phiX);
\draw[dashedarrow, bend left=6] (Y) to node[above] {$\mathcal{C}_\phi$} (phiY);

\draw[dashedarrow] (XY) -- node[right] {$\mathcal{C}_\phi$} (phiXY);

\end{tikzpicture}
    \caption{Diagram showing that $\mathcal{C}_{\phi}$ preserves intersections.}
    \label{fig:hom_map}
\end{figure}

    \begin{flalign*}
        \mathcal{C}_\phi(\mathbf{C}_G(X) \cap \mathbf{C}_G(Y)) &=\mathcal{C}_\phi(\mathbf{C}_G(\langle X,Y \rangle)) \\&= \phi(\mathbf{C}_G(X,Y))\\
        &= \mathbf{C}_H(\phi(\langle X, Y))\\
        &= \mathbf{C}_H(\langle \phi(X), \phi(Y) \rangle)\\
        &= \mathbf{C}_H(\phi(X)) \cap \mathbf{C}_H(\phi(Y))\\
        &=\phi(\mathbf{C}_G(X)) \cap \phi(\mathbf{C}_G(Y)) \\
        &= \mathcal{C}_{\phi}(\mathbf{C}_G(X)) \cap \mathcal{C}_{\phi}(\mathbf{C}_G(Y))
    \end{flalign*}

    We conclude that $\mathcal{C}_\phi$ is a centralizer lattice homomorphism from $\mathcal{C}(G)$ to $\mathcal{C}(H)$.

\end{proof}

\section{The two categories and a functor between them.} \label{sec: categories}
We are now ready to show that groups with centralizer-respecting homomorphisms forms a category. We use the definitions of categories and functors from Awodey \cite{awodey2010category}.



    

\begin{lemma} \label{lem: crh_category}
    There is a category whose objects consist of groups and whose arrows are centralizer-respecting homomorphisms. We write $\mathbf{Groups}_{\mathbf{crh}}$ for this category. 
\end{lemma}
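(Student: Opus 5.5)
To verify the category axioms for $\mathbf{Groups}_{\mathbf{crh}}$, I take the objects to be all groups, the arrows $G\to H$ to be the centralizer-respecting homomorphisms, and composition to be ordinary composition of functions. Two things need to be checked: that identities exist, and that centralizer-respecting homomorphisms are closed under composition. Associativity and the unit laws then follow for free, because they already hold in the category of sets (or of groups) and the composition here is the same function composition.

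For identities, I note that $\mathrm{id}_G$ is a surjective homomorphism. For every subgroup $A\le G$ it satisfies $\mathrm{id}_G(\mathbf{C}_G(A))=\mathbf{C}_G(A)=\mathbf{C}_G(\mathrm{id}_G(A))$, so it is centralizer-respecting.

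For composition, let $\phi:G\to H$ and $\psi:H\to K$ be centralizer-respecting. The composite $\psi\circ\phi$ is a composite of surjective homomorphisms, so it is a surjective homomorphism. Now fix $A\le G$. Since $\phi$ is a homomorphism, $\phi(A)$ is a subgroup of $H$, so the centralizer-respecting property of $\psi$ applies to it. This gives
\[
(\psi\circ\phi)(\mathbf{C}_G(A))=\psi(\phi(\mathbf{C}_G(A)))=\psi(\mathbf{C}_H(\phi(A)))=\mathbf{C}_K(\psi(\phi(A)))=\mathbf{C}_K((\psi\circ\phi)(A)).
\]
Hence $\psi\circ\phi$ is centralizer-respecting.

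The only point needing any care, and it is minor, is in the composition step. The definition of centralizer-respecting quantifies over subgroups rather than arbitrary subsets, so I must confirm that $\phi(A)$ is again a subgroup before applying the property of $\psi$. This holds because the image of a subgroup under a homomorphism is a subgroup.
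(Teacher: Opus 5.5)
Your proof is correct and follows the paper's argument: associativity and the unit laws are inherited from $\mathbf{Groups}$, and closure under composition comes from the chain $(\psi\circ\phi)(\mathbf{C}_G(A))=\psi(\mathbf{C}_H(\phi(A)))=\mathbf{C}_K(\psi(\phi(A)))$ together with surjectivity of the composite. You are slightly more careful than the paper, since you explicitly check that $\mathrm{id}_G$ is centralizer-respecting and that $\phi(A)$ is a subgroup, and the paper leaves both implicit.
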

\begin{proof}
    Since $\mathbf{Groups}$ forms a category, we know that the composition of homomorphisms is associative, and there are appropriate identity homomorphisms from each group to itself. To show that $\mathbf{Groups}_{\mathbf{crh}}$ forms a category, we need to show that the composition of two centralizer-respecting homomorphisms is a centralizer-respecting homomorphism. Let both $\phi:G\rightarrow H$ and $\psi:H\rightarrow K$ be centralizer-respecting homomorphisms. Then  
    \[(\psi\circ \phi)(\mathbf{C}_G(A)) = \psi(\mathbf{C}_H(\phi(A))) = \mathbf{C}_K(\psi(\phi(A))).\] Since the composition of surjective maps is surjective, we conclude that the composition of centralizer-respecting homomorphisms is centralizer-respecting. 

    Moreover, for any group $G$, the identity isomorphism is the identity arrow $\mathbf{1}_G$.

    Hence $\mathbf{Groups}_{\mathbf{crh}}$ forms a category.
\end{proof}

\begin{definition}[$\mathbf{CentLattices}$]
The category $\mathbf{CentLattices}$ has centralizer lattices of groups as objects and centralizer lattice homomorphisms as arrows.  
\end{definition}

\begin{definition}[functor]
    A \textbf{functor} $\mathcal{F}$ from $\mathcal{A}$ to $\mathcal{B}$ consists of \begin{itemize}
        \item a mapping of objects $A\in \mathcal{A}$ to objects $\mathcal{F}(A) \in \mathcal{B}$. 
        \item a mapping of morphisms $f \in \mathcal{A}$ to morphisms $\mathcal{F}(f)\in \mathcal{B}$ such that:
        \begin{enumerate}
            \item $\text(dom)(\mathcal{F}(f)) = \mathcal{F}(\text{dom}(f))$.
            \item $\text(cod)(\mathcal{F}(f)) = \mathcal{F}(\text{cod}(f))$.
            \item $\mathcal{F}(1_A)  = 1_{\mathcal{F}(A)}$.
            \item $\mathcal{F}(g\circ f) = \mathcal{F}(g) \circ \mathcal{F}(f)$, for composable $f$ and $g$ in $\mathcal{A}$.
        \end{enumerate}
    \end{itemize}
\end{definition}

We now prove Theorem \ref{prevtheorem: functor} that the map $\mathcal{C}$ is a functor from $\mathbf{Groups}_{\mathbf{crh}}$ to $\mathbf{CentLattices}$. 

\begin{proof}[Proof of\, $\mathbf{Theorem}$ $\mathbf{B}$]
We need to show that $\mathcal{C}$ satisfies the definition of a functor.

By construction $\mathcal{C}$ maps groups to centralizer lattices. Moreover for a centralizer-respecting homomorphism $\phi$ from $G$ to $H$ the centralizer lattice homomorphism $\mathbf{C}_\phi$ has domain $\mathcal{G}$ and $\mathcal{H}$. 

We note that for the identity homomorphism $1_G$ from $G$ to $G$, the induced map on centralizer lattices is the identity map, i.e., $\mathcal{C}(1_G)=1_{\mathcal{C}(G)}$. 

We now show that $\mathcal{C}$ behaves well with respect to composition of homomorphisms. Let $\phi:G\rightarrow H$ and $\psi:H\rightarrow K$ be centralizer-respecting homomorphisms. Consider the diagram in Figure \ref{fig: composition}. We compute  \[(\mathcal{C}_{\psi}\circ \mathcal{C}_{\phi})(\mathbf{C}_G(X)) = \mathcal{C}_\psi(\phi(\mathbf{C}_G(X))) = \psi(\phi(\mathbf{C}_G(X))) = (\psi \circ \phi)(\mathbf{C}_G(X)) = \mathcal{C}_{\psi\circ \phi}(\mathbf{C}_G(X)). \]
\begin{figure}
    \centering
    
\[\begin{tikzcd}
	G && H && K \\
	{\mathcal{C}(G)} && {\mathcal{C}(H)} && {\mathcal{C}(K)}
	\arrow["\phi", from=1-1, to=1-3]
	\arrow["{\psi \circ \phi}"', curve={height=-36pt}, from=1-1, to=1-5]
	\arrow["{\mathcal{C}}"', from=1-1, to=2-1]
	\arrow["\psi", from=1-3, to=1-5]
	\arrow["{\mathcal{C}}"', from=1-3, to=2-3]
	\arrow["{\mathcal{C}}"', from=1-5, to=2-5]
	\arrow["{\mathcal{C}_{\phi}}", from=2-1, to=2-3]
	\arrow["{\mathcal{C}_{\psi \circ \phi}}"', curve={height=36pt}, from=2-1, to=2-5]
	\arrow["{\mathcal{C}_{\psi}}", from=2-3, to=2-5]
\end{tikzcd}\]
    \caption{The commutative diagram showing that $\mathcal{C}$ is a functor from $\mathbf{Groups}_{\mathbf{crh}}$ to $\mathbf{CentLattices}.$}
    \label{fig: composition}
\end{figure}

Hence $\mathcal{C}$ is a functor from $\mathbf{Groups}_{\mathbf{crh}}$ to $\mathbf{CentLattices}$. 

\end{proof}

\section{Examples of centralizer-respecting homomorphisms.}\label{sec: applications}

In this section, we show the existence of many centralizer-respecting homomorphisms. As part of this discussion we note the following cancellation property which is of independent interest. 


 
    


\begin{lemma}\label{lem: cancellation}
    Let $G,H,K$ be groups and let $\phi:G\rightarrow H$ be centralizer-respecting and $\psi:H\rightarrow K$ be surjective. Then $\psi\circ \phi $ is centralizer-respecting if and only if $\psi$ is centralizer-respecting.
\end{lemma}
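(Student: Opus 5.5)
The plan is to prove the two implications separately, using throughout that $\phi$ is surjective, so every subgroup $B\leq H$ has the form $B=\phi(A)$ for some $A\leq G$ (take $A=\phi^{-1}(B)$). Note also that $\psi\circ\phi$ is surjective because both maps are. So in either direction only the centralizer condition needs checking.

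For the reverse implication, assume $\psi$ is centralizer-respecting. Then $\psi\circ\phi$ is centralizer-respecting by Lemma \ref{lem: crh_category}, since a composite of centralizer-respecting homomorphisms is again one. Concretely,
\[(\psi\circ\phi)(\mathbf{C}_G(A))=\psi(\mathbf{C}_H(\phi(A)))=\mathbf{C}_K(\psi(\phi(A)))\]
for every $A\leq G$.

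For the forward implication, assume $\psi\circ\phi$ is centralizer-respecting, and let $B\leq H$ be arbitrary. Set $A=\phi^{-1}(B)$, so that $\phi(A)=B$ by surjectivity of $\phi$. Since $\phi$ is centralizer-respecting, $\mathbf{C}_H(B)=\mathbf{C}_H(\phi(A))=\phi(\mathbf{C}_G(A))$. Applying $\psi$ gives
\[\psi(\mathbf{C}_H(B))=(\psi\circ\phi)(\mathbf{C}_G(A))=\mathbf{C}_K((\psi\circ\phi)(A))=\mathbf{C}_K(\psi(B)),\]
where the middle equality is the hypothesis on $\psi\circ\phi$. Since $B$ was arbitrary, $\psi$ is centralizer-respecting.

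The only step needing care is that every subgroup of $H$ is the image under $\phi$ of a subgroup of $G$. This is exactly where surjectivity of $\phi$ is used, and taking the full preimage $\phi^{-1}(B)$ supplies such a subgroup. Everything else is direct substitution of the definitions.
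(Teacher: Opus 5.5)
Your proof is correct and follows the paper's argument. The reverse direction comes from closure under composition (Lemma \ref{lem: crh_category}), and the forward direction lifts a subgroup of $H$ through the surjection $\phi$ and chases the same chain of equalities; you argue directly with the explicit choice $\phi^{-1}(B)$ where the paper argues by contradiction, which is only a cosmetic difference.
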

\begin{proof}
    $\Rightarrow$ Suppose by way of contradiction that $\psi \circ \phi$ is centralizer-respecting, but $\psi$ is not centralizer-respecting. Then there is some subgroup $A\leq H$ such that $\psi(\mathbf{C}_H(A)) \neq \mathbf{C}_K(\psi(A))$. Since $\phi$ is surjective, we can select a $B\leq G$ such that $\phi(B)=A$. Then we obtain 
    \begin{flalign*} \mathbf{C}_K(\psi \circ \phi (B)) &= (\psi\circ \phi)(\mathbf{C}_G(B)) = \psi ( \mathbf{C}_H(\phi(B))) = \psi(\mathbf{C}_H(A)).\\
    \mathbf{C}_K(\psi \circ \phi(B)) &= \mathbf{C}_K(\psi (\phi(B))) =\mathbf{C}_K(\psi(A)).  \end{flalign*} Contradicting the assumptions that $\psi$ is not centralizer-respecting. 
    
    $\Leftarrow$ The composition of centralizer-respecting homomorphisms is centralizer-respecting since $\mathbf{Groups}_{\mathbf{cr
    h}}$ forms a category. 
\end{proof}

    

The next lemma demonstrates that the functor $\mathcal{C}$ translates some properties of centralizer-respecting homomorphisms to the induced map on lattices. The ability to ask such questions motivates the categorical approach. 

\begin{lemma}
    Let $\phi$ be a centralizer-respecting homomorphism between two groups $G$ and $H$. Then the kernel of $\phi$ is central if and only if the induced map $\mathcal{C}_{\phi}$ on centralizer lattices is an isomorphism. 
\end{lemma}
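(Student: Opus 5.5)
The plan is to handle the two directions separately. Throughout, write $N=\ker\phi$. Theorem \ref{prevtheorem: hom} already shows that $\mathcal{C}_\phi$ is a centralizer lattice homomorphism, so being an isomorphism reduces to bijectivity. The one extra point is that the inverse of a bijective centralizer lattice homomorphism again preserves $\wedge_\mathbf{C}$, $\vee_\mathbf{C}$ and the involution; this is a routine transport-of-structure check.

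For ($\Leftarrow$), assume $N$ is central. Every centralizer $\mathbf{C}_G(X)$ contains $\mathbf{Z}(G)=\mathbf{C}_G(G)$, so it contains $N$. By the correspondence theorem, $\phi$ induces an injective map on subgroups of $G$ containing $N$. Hence $\phi(\mathbf{C}_G(X))=\phi(\mathbf{C}_G(Y))$ forces $\mathbf{C}_G(X)=\mathbf{C}_G(Y)$, which gives injectivity of $\mathcal{C}_\phi$.

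For surjectivity, take any centralizer $\mathbf{C}_H(Y)$ of $H$. Since $\phi$ is surjective, let $B=\phi^{-1}(\langle Y\rangle)$, so that $\phi(B)=\langle Y\rangle$. Because $\phi$ is centralizer-respecting,
\[\mathcal{C}_\phi(\mathbf{C}_G(B))=\phi(\mathbf{C}_G(B))=\mathbf{C}_H(\phi(B))=\mathbf{C}_H(\langle Y\rangle)=\mathbf{C}_H(Y).\]
Thus $\mathcal{C}_\phi$ is a bijective centralizer lattice homomorphism, hence an isomorphism.

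For ($\Rightarrow$), assume $\mathcal{C}_\phi$ is an isomorphism, in particular injective. The key observation is to compare the two centralizers $G=\mathbf{C}_G(1)$ and $\mathbf{C}_G(N)$, both of which lie in $\mathcal{C}(G)$. Using the centralizer-respecting property,
\[\mathcal{C}_\phi(\mathbf{C}_G(N))=\phi(\mathbf{C}_G(N))=\mathbf{C}_H(\phi(N))=\mathbf{C}_H(1)=H.\]
On the other hand, $\mathcal{C}_\phi(G)=\phi(G)=H$ by surjectivity. Injectivity then gives $\mathbf{C}_G(N)=G$, that is, $N\le \mathbf{Z}(G)$, so the kernel is central.

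The main obstacle is this forward direction: one has to find a pair of centralizers whose images are forced to coincide. Choosing $\mathbf{C}_G(N)$ against the top element $G$ resolves it immediately. The rest is bookkeeping, namely checking that the inverse map respects the operations, which is standard for bijective homomorphisms of algebraic structures.
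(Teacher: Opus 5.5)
Your proposal is correct and follows essentially the same route as the paper. Surjectivity comes from a preimage plus the centralizer-respecting property, injectivity holds because a central kernel lies in every centralizer, and the converse compares $\mathcal{C}_\phi(\mathbf{C}_G(N)) = H = \mathcal{C}_\phi(G)$, where you usefully make explicit the appeal to injectivity that the paper leaves tacit. The only blemish is cosmetic: your labels $(\Leftarrow)$ and $(\Rightarrow)$ are swapped relative to the statement, which reads ``kernel central if and only if $\mathcal{C}_\phi$ is an isomorphism.''
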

\begin{proof}
   $\Rightarrow$ Write $K$ for the kernel of $\phi$ and assume that $K$ is central. We show that $\mathcal{C}_\phi$ is an isomorphishm. First we show it is surjective. Given a centralizer $\mathbf{C}_H(B)$ in $H$, we note that there is some $A\leq G$ with $\phi(A) = B$. Then \[\mathbf{C}_H(B) = \mathbf{C}_H(\phi(A)) = \phi(\mathbf{C}_G(A))= \mathcal{C}_\phi(\mathbf{C}_G(A)).\] For injectivity consider two subgroups $A,C \leq G$ such that $\mathcal{C}_{\phi}(\mathbf{C}_G(A)) = \mathcal{C}_\phi (\mathbf{C}_G(C))$. Then $\phi(\mathbf{C}_G(A) = \phi(\mathbf{C}_G(C))$ which implies that $\mathbf{C}_G(A) K = \mathbf{C}_G(C) K$. But, since $K$ is central, it is contained in $\mathbf{C}_G(A)$ and $\mathbf{C}_G(C)$ and thus we conclude that $\mathbf{C}_G(A) = \mathbf{C}_G(C)$. 

   $\Leftarrow$ Assume that $\mathcal{C}_\phi$ is an isomorphism of $\mathcal{C}(G)$ and $\mathcal{C}(H)$. Then in particular \[H=\mathbf{C}_H(\phi(K))= \phi(\mathbf{C}_G(K)).\] Thus $\mathbf{C}_G(K) = G$ and $K$ is central.
\end{proof}


We can now prove Theorem \ref{prevtheorem: iff} which establishes a sufficient and necessary criteria for surjective homomorphisms with central kernels to be centralizer-respecting. 

\begin{proof}[Proof of $\mathbf{Theorem}$ $\mathbf{\ref{prevtheorem: iff}}$]
(1) $\Rightarrow (2)$. Suppose that the homomorphism $\phi$ is centralizer-respecting and by way of contradiction that $\phi([a,b])=1$ for $a,b\in G$ and $[a,b]\neq 1$. Then $b$ is not contained in $\mathbf{C}_G(\langle a \rangle )$. So $[\phi(a),\phi(b)] = \phi([a,b]) = 1$, which implies that $\phi(b)$ is contained in $\mathbf{C}_H(\phi(\langle a\rangle)) = \phi(\mathbf{C}_G(\langle a \rangle)$. Hence we can write $b$ as $c z$ for some $c\in \mathbf{C}_G(\langle a \rangle)$ and $z\in \mathbf{ker}(\phi)$, where $\mathbf{ker}(\phi)$ is central. Then $[a,b] = [a,cz] = 1$ a contradiction. 

(2) $\Rightarrow (1)$. Suppose now that the $\mathbf{ker}(\phi)$ does not contain any nontrivial commutators and by way of contradiction that the homomorphism $\phi$ is not centralizer-respecting. Choose an $A\leq G$ such that $\phi(\mathbf{C}_G(A)) \neq \mathbf{C}_H(\phi(A))$. There is an element $b\in G$ with $[a,b]\neq 1$ such that $\phi(b) \in \mathbf{C}_H(\phi(A))$. Hence $\phi([a,b])=1$ a contradiction.
\end{proof} 

Figure \ref{fig:example} represents the homomorphism from the group $G=\langle x, y \; | \; x^y = x^{-1} \rangle$ to $Q_8$ and shows the induced isomorphism of the centralizer lattice.

We conclude by proving the lemmata mentioned in Corollary \ref{cor: iff}, which stated that any two maximal class 2-groups of the same order have isomorphic centralizer lattices. Since the corollary occurs earlier in the paper, we emphasize that the following lemmata are independent of the rest of the paper.
\begin{lemma}\label{lem: D_Q}
Fix $n>2$. Let $G = \langle x, y\; | \;x^{2^{{n-1}}}=y^{4} =1, x^y = x^{-1} \rangle $ of order $2^{n+1}$. Then the following subgroups of $G$ are central and do not contain any nontrivial elements of the form $[g,h]$ for $g,h\in G$. 
\begin{itemize}
    \item $Z_D = \langle y^2 \rangle.$ 
    \item $Z_Q = \langle x^{2^{n-2}} y^2 \rangle.$
\end{itemize} In addition the quotient $G/Z_D$ is a dihedral group of order $2^n$ and the quotient $G/Z_Q$ is a generalized quaternion group of order $2^n$.   
\end{lemma}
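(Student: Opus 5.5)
Write $m = 2^{n-1}$ for the order of $x$ and $z = x^{m/2} = x^{2^{n-2}}$, which is the unique involution in $\langle x\rangle$. The plan is to first pin down the structure of $G$ explicitly and then verify the three claims (centrality, no nontrivial commutators, identification of the quotients) by direct computation with normal forms.

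\textbf{Structure of $G$.} Since $y$ acts on the cyclic normal subgroup $\langle x\rangle$ by inversion and $y^4 = 1$, the group $G$ is the semidirect product $\langle x\rangle \rtimes \langle y\rangle \cong C_{2^{n-1}} \rtimes C_4$, with $y^2$ acting trivially. Hence $|G| = 2^{n+1}$ and every element has a unique normal form $x^i y^j$ with $0\le i<m$ and $0\le j<4$.

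\textbf{Centrality.} The element $y^2$ commutes with $y$, and it commutes with $x$ because $x^{y^2} = x$; so $y^2$ is central. The element $z$ is central because $z^y = z^{-1} = z$. Hence $zy^2$ is central too.

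\textbf{Commutators.} First compute $[x^i y^j, x^k y^l]$. Since $G/\langle x\rangle$ is abelian, every commutator lies in $\langle x\rangle$. Working modulo the center, one finds that each commutator is $x^{2t}$ for some $t$:
\begin{itemize}
\item $[x,y] = x^{-2}$;
\item conjugation by $y$ inverts $\langle x\rangle$, so $[x^i, y] = x^{-2i}$;
\item $[x^i, x^k y] = x^{-2i}$;
\item $[x^i y, x^k y] = x^{2(k-i)}$, up to sign conventions.
\end{itemize}
So the set of commutators is contained in $\langle x^2\rangle \subseteq \langle x\rangle$.

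The subgroup $Z_D = \{1, y^2\}$ meets $\langle x\rangle$ trivially, since $y^2 \notin \langle x\rangle$ by the normal form. Likewise $Z_Q = \{1, zy^2\}$ meets $\langle x\rangle$ trivially, since $zy^2 \notin \langle x\rangle$. Therefore neither subgroup contains a nontrivial commutator. This observation, that the derived subgroup lies inside $\langle x\rangle$ while both $y^2$ and $zy^2$ lie outside it, is the crux. The only genuine care needed is confirming that $y^2 \notin \langle x\rangle$, which is exactly the statement that the semidirect product is not collapsed, i.e.\ that $|G| = 2^{n+1}$.

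\textbf{Quotients.} In $G/Z_D$ we have $\bar y^2 = 1$, $\bar x$ of order $2^{n-1}$ (since $\langle x\rangle \cap Z_D = 1$), and $\bar x^{\bar y} = \bar x^{-1}$. This is the standard presentation of the dihedral group of order $2^n$, and the order matches.

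In $G/Z_Q$ we have $\bar y^2 = \bar z^{-1} = \bar x^{2^{n-2}}$, $\bar x$ of order $2^{n-1}$, and $\bar x^{\bar y} = \bar x^{-1}$. This is the standard presentation of the generalized quaternion group $Q_{2^n}$, and the order $2^{n+1}/2 = 2^n$ matches. One can check that these quotient groups satisfy exactly these relations and have the right order, so they are the claimed groups. For $n = 3$ one should note that the "generalized quaternion" group is $Q_8$, so the identification remains valid.

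I expect no real obstacle here. The main bookkeeping step is the commutator computation, and that is handled cleanly by the structural observation that $G' \le \langle x\rangle$, without needing the exact form of every commutator.
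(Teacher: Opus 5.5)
Your proposal is correct and follows essentially the same route as the paper: you show centrality by direct conjugation, get $G'\le\langle x\rangle$ because $G/\langle x\rangle$ is abelian, observe that $Z_D$ and $Z_Q$ meet $\langle x\rangle$ trivially, and read off the dihedral and generalized quaternion presentations of the quotients. You are slightly more careful than the paper, which leaves implicit (in its ``hence'') that $y^2$ and $x^{2^{n-2}}y^2$ lie outside $\langle x\rangle$, while you justify this with the normal form $x^iy^j$; your explicit list of commutators is unnecessary (its last entry should read $x^{2(i-k)}$) but harmless.
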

\begin{proof} 
Since $x^{y^2} = (x^{-1})^{-1} = x$, we conclude that $y^2$ is central and has order 2. Similarly $(x^{2^{n-2}})^y = ({x^{2^{n-2}}})^{-1} = x^{2^{n-2}}$ and thus $x^{2^{n-2}}$ is central. Thus $Z_D$ and $Z_Q$ are both central subgroups of $G$ and each have order $2$. The commutator subgroup of $G$ is contained in $\langle x \rangle$ since $G/\langle x \rangle$ is abelian. Hence neither $Z_D$ nor $Z_Q$ contain any nontrivial commutators. 

We now examine the quotient $G/Z_D$, which has presentation, using the standard bar-notation for elements of the quotient: \[G/Z_D = \langle \overline{x}, \overline{y} \; | \; \overline{x}^{2^{n-1}} = \overline{y}^2 = 1, \overline{x}^{\overline{y}} = \overline{x}^{-1}\rangle,\] which is the standard presentation for the dihedral group of order $2^{n}$. 

Consider $G/Z_Q$, again with the standard bar-notation for elements of the quotient: 
\[G/Z_Q = \langle \overline{x},\overline{y}\;|\; \overline{x}^{2^{n-1}} = \overline{y}^{4} = 1, \overline{x}^{2^{n-2}}=\overline{y}^2, \,\overline{x}^{\overline{y}} = \overline{x}^{-1}\rangle ,\] which is the standard presentation for the quaternion group of order $2^n$. 
\end{proof}

\begin{lemma}\label{lem: Q_SD}
Fix $n>2$. Let $G =\langle x,y\,|\, x^{2^{2n-1}}=y^4 = 1, x^y = x^{2^{n-2}-1}\rangle$ of order $2^{n+1}$. 
The following subgroups of $G$ are central and do not contain any nontrivial elements of the form $[g,h]$ for $g,h\in G$. 
\begin{itemize} 
\item $Z_Q = \langle  x^{2^{n-2}} y^2 \rangle$.
\item $Z_{SD} = \langle y^2 \rangle$.
\end{itemize} In addition the quotient $Z_Q$ is a generalized quaternion group of order $2^n$ and the quotient $G/Z_{SD}$ is a semi-dihedral group of order $2^n$. 
\end{lemma}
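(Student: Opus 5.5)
I would model the argument on the proof of Lemma \ref{lem: D_Q}, reading the exponent in the presentation as $x^{2^{n-1}}=1$ (so that $|G|=2^{n+1}$). I would also assume $n\geq 4$: when $n=3$ the relation $x^y=x^{2^{n-2}-1}=x$ makes $G$ abelian, and the claimed quotients cannot occur.

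\textbf{Step 1: $G$ has order $2^{n+1}$.} I would check that $G=\langle x\rangle\rtimes\langle y\rangle$ with $|x|=2^{n-1}$ and $|y|=4$. For this, conjugation by $y$ must be an automorphism of $\langle x\rangle$ whose order divides $4$. Squaring the exponent gives
\[
(2^{n-2}-1)^2=2^{2n-4}-2^{n-1}+1\equiv 1 \pmod{2^{n-1}},
\]
since $2n-4\geq n-1$. So $y^2$ acts trivially and the semidirect product is well defined. In particular $y^2\notin\langle x\rangle$.

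\textbf{Step 2: centrality.} Step 1 shows that $y^2$ commutes with $x$, so $y^2$ is central. For $x^{2^{n-2}}$, its image under conjugation by $y$ is $x$ raised to
\[
2^{n-2}(2^{n-2}-1)=2^{2n-4}-2^{n-2}\equiv 2^{n-2}\pmod{2^{n-1}},
\]
so $x^{2^{n-2}}$ is central. Hence both $Z_{SD}$ and $Z_Q$ are central subgroups of order $2$.

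\textbf{Step 3: no nontrivial commutators.} As in Lemma \ref{lem: D_Q}, $G/\langle x\rangle$ is abelian, so every commutator lies in $\langle x\rangle$. Neither $y^2$ nor $x^{2^{n-2}}y^2$ lies in $\langle x\rangle$, by the semidirect decomposition. So $Z_{SD}\cap\langle x\rangle=Z_Q\cap\langle x\rangle=1$, and neither subgroup contains a nontrivial commutator.

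\textbf{Step 4: the quotients.} Killing $y^2$ gives
\[
\langle \overline x,\overline y \mid \overline x^{2^{n-1}}=\overline y^{2}=1,\ \overline x^{\overline y}=\overline x^{2^{n-2}-1}\rangle,
\]
which is the standard presentation of the semidihedral group of order $2^n$.

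For $G/Z_Q$ I would impose $\overline y^2=\overline x^{2^{n-2}}$. I would then search for new generators $\overline x'$ of order $2^{n-1}$ and $\overline y'$ satisfying $\overline y'^2=\overline x'^{2^{n-2}}$ and $(\overline x')^{\overline y'}=\overline x'^{-1}$, which is the generalized quaternion presentation. The first thing to try is to compute $(x^iy)^2=x^{i2^{n-2}}y^2$ for all $i$ and look for a coset element that inverts a cyclic subgroup of index $2$.

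\textbf{Main obstacle.} I expect Step 4 for $Z_Q$ to be the hard step, and possibly where the statement needs repair. Since $Z_Q\cap\langle x\rangle=1$, the image $\langle\overline x\rangle$ is still cyclic of index $2$. Conjugation by $\overline y$ acts on it by the power map $2^{n-2}-1$, not by inversion. In a generalized quaternion group of order $2^n\geq 16$, the cyclic maximal subgroup is unique and is inverted by every element outside it. The computation above gives $(x^iy)^2\in Z_Q$ exactly for odd $i$, so the noncyclic coset of the quotient contains both involutions and elements of order $4$. This suggests that $G/Z_Q$ is again semidihedral, not quaternion.

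If so, I would fix the lemma by changing the ambient group rather than the kernel. I would look for a group of order $2^{n+1}$ with a cyclic subgroup $\langle x\rangle$ of order $2^{n}$ and a noncentral element $y$, for instance with $y^2$ a suitable power of $x$ times a central involution. I would choose it so that one central subgroup of order $2$, meeting $\langle x\rangle$ trivially in the commutator-free sense, yields the semidihedral relation, and another yields the quaternion relation. Steps 2 and 3 would then be rerun verbatim.
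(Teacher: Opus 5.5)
Your Steps 1--3 and the semidihedral quotient are essentially the paper's proof: centrality comes from the action of $y$, $G'\le\langle x\rangle$ because $G/\langle x\rangle$ is abelian, and the presentation of $G/Z_{SD}$ is read off directly. Your reading $x^{2^{n-1}}=1$ is the intended one, since the paper itself uses $\overline{x}^{2^{n-1}}=1$ in $G/Z_Q$. Your remark that $n=3$ gives the abelian group $\mathbf{Z}_4\times\mathbf{Z}_4$ is also correct.

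Your objection to the quaternion claim is right, and it locates a genuine error in the paper's proof. The paper sets $\overline{z}=\overline{xy^2}$ and asserts $\overline{z}^{\overline{y}}=\overline{x}^{2^{n-2}}\overline{x}^{-1}=\overline{z}^{-1}$, but this drops the factor $(\overline{y}^2)^{\overline{y}}=\overline{y}^2$. In fact $\overline{z}^{\overline{y}}=\overline{x}^{2^{n-2}-1}\overline{y}^2=\overline{x}^{-1}$, while $\overline{z}^{-1}=\overline{y}^2\overline{x}^{-1}=\overline{x}^{2^{n-2}-1}$. As you say, $\overline{y}$ acts on all of $\langle\overline{x}\rangle$ as $u\mapsto u^{2^{n-2}-1}$, so no generator is inverted.

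Your argument already shows that $G/Z_Q$ is not generalized quaternion, and you can pin it down. The map $x\mapsto x$, $y\mapsto xy$ respects the relations, because $(xy)^2=x^{2^{n-2}}y^2$ is a central involution and $x^{xy}=x^y$. So it is an automorphism of $G$ carrying $Z_{SD}$ onto $Z_Q$, and $G/Z_Q\cong G/Z_{SD}$ is semidihedral. The lemma is therefore false as stated, and the paper ``proves'' it only through this miscalculation.

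The part of your proposal that would fail is the repair. A group of order $2^{n+1}$ with a cyclic subgroup $\langle x\rangle$ of order $2^n$ cannot serve, for two reasons:
\begin{enumerate}
\item a central involution outside $\langle x\rangle$ makes the group abelian;
\item a nonabelian group of order $2^{n+1}$ with a cyclic maximal subgroup is dihedral, generalized quaternion, semidihedral or modular, and each has a unique central involution, which lies in $G'$.
\end{enumerate}
Instead, keep $|x|=2^{n-1}$ and twist the action by $y^2$. For $n\ge 4$ take
\[G=\langle x,y\mid x^{2^{n-1}}=y^4=[x,y^2]=1,\ x^y=x^{2^{n-2}-1}y^2\rangle.\]
This is an extension of $\langle x\rangle\times\langle y^2\rangle\cong\mathbf{Z}_{2^{n-1}}\times\mathbf{Z}_2$ by $\mathbf{Z}_2$, of order $2^{n+1}$, since $x\mapsto x^{2^{n-2}-1}y^2$, $y^2\mapsto y^2$ is an involutory automorphism. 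In this group:
\begin{enumerate}
\item $y^2$ and $x^{2^{n-2}}$ are central;
\item $G/\langle y^2\rangle$ is semidihedral;
\item modulo $\langle x^{2^{n-2}}y^2\rangle$ one gets $\overline{y}^2=\overline{x}^{2^{n-2}}$ and $\overline{x}^{\overline{y}}=\overline{x}^{2^{n-2}-1}\overline{x}^{2^{n-2}}=\overline{x}^{-1}$, which is the generalized quaternion presentation.
\end{enumerate}

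Your Step 3 cannot be rerun verbatim here, since $\langle x\rangle$ is no longer normal. Instead, $G'=\{a^{-1}a^y : a\in\langle x,y^2\rangle\}=\langle x^{2^{n-2}-2}y^2\rangle$. This is cyclic of order $2^{n-2}$ with unique involution $x^{2^{n-2}}$, so it meets both $\langle y^2\rangle$ and $\langle x^{2^{n-2}}y^2\rangle$ trivially. This repaired group is what the corollary on maximal class $2$-groups actually needs.
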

\begin{proof}
    Since $x^{y^2} = (x^{2^{n-2}-1})^{2^{n-2}-1} = x$, we conclude that $y^2$ is central and has order 2. Similarly $({x^{2^{n-2}}})^y = x^{2^{n-2}}$ and thus $x^{2^{n-2}}$ is central. Thus $Z_Q$ and $Z_{SD}$ are both central subgroups of $G$ and each have order $2$. The commutator subgroup of $G$ is contained in $\langle x \rangle$ since $G/\langle x \rangle$ is abelian. Hence neither $Z_D$ nor $Z_Q$ contain any nontrivial commutators. 

Consider $G/Z_Q$, again with the standard bar-notation for elements of the quotient: 
\[G/Z_Q = \langle \overline{x},\overline{y}\;|\; \overline{x}^{2^{n-1}} = \overline{y}^{4} = 1, \overline{x}^{2^{n-2}}=\overline{y}^2, \overline{x}^{\overline{y}} = \overline{x}^{2^{n-2}-1}\rangle ,\] which is not the standard representation of the quaternion group. Write  $\overline{z}$ for $\overline{xy^2}$. Then $\overline{z}^{\overline{y}} = \overline{x}^{2^{n-2}}\overline{x}^{-1} = \overline{y}^2 \overline{x}^{-1} = \overline{z}^{-1}$. This allows us to write
\[G/Z_Q = \langle \overline{z}, \overline{y} \; | \; \overline{z}^{2^{n-1}} = \overline{y}^4 = \overline{1}, \,\overline{z}^{2^{n-2}} = \overline{y}^2,\, \overline{z}^{\overline{y}} = \overline{z}^{-1} \rangle,\] which is the standard presentation of the quaternion group of order $2^n$. 

We now explore $G/Z_{SD}$, again with the standard bar-notation for elements of the quotient:
\[
G/Z_{SD} = \langle \overline{x}, \overline{y} \; | \; \overline{x}^{2^{2n-1}} = \overline{y}^2 = 1,\, \overline{x}^{\overline{y}} = \overline{x}^{2^{n-2}-1},\rangle 
\] which is the standard presentation of the semi-dihedral group of order $2^n$. 
\end{proof}

    Figure \ref{fig:example} shows an isomorphism from the group $G= \langle x,y \; | \; x^4=y^4=1, x^y=x^{-1}\rangle$ to $Q_8$. 

\begin{figure}
    \centering
\[\begin{tikzcd}[cramped, sep=small]
	&& G &&&&& {G/K} & \\
	&& \node[draw=gray] (2-3) {\mathcal{C}(G)}; &&&&& \node[draw=gray] (2-8) {\mathcal{C}(G/K)};\\[3em]
	&& &&&&& \node[draw=gray] (3-8) {Q_8}; \\
	&&&&&& \node[draw=gray] (4-7) {I}; & \node[draw=gray] (4-8) {J}; & \node[draw=gray] (4-9) {K}; \\
	&& \node[draw=gray] (5-3) {G=\mathbf{Z}_4\rtimes \mathbf{Z}_4}; &&&&& \node[draw=gray] (5-8) {\langle-1\rangle}; \\
	& \node[draw=gray] (6-2) {\mathbf{Z}_4\times \mathbf{Z}_2}; && \node[draw=gray] (6-4) {\mathbf{Z}_2\times\mathbf{Z}_4}; && \node[draw=gray] (6-6) {\mathbf{Z}_2\times\mathbf{Z}_4}; && 1 \\
	{\mathbf{Z}_4} & {\mathbf{Z}_4} & \node[draw=gray] (7-3) {\mathbf{Z}_2\times\mathbf{Z}_2}; & {\mathbf{Z}_4} & {\mathbf{Z}_4} & {\mathbf{Z}_4} & {\mathbf{Z}_4} \\
	& {\mathbf{Z}_2} & {K=\mathbf{Z}_2} && {\mathbf{Z}_2} \\
	&& 1
	\arrow["\phi", from=1-3, to=1-8]
	\arrow["{\mathcal{C}_\phi}", dotted, from=2-3, to=2-8]
	\arrow[no head, from=3-8, to=4-8]
	\arrow[no head, from=3-8, to=4-9]
	\arrow[no head, from=4-7, to=3-8]
	\arrow[no head, from=4-9, to=5-8]
	\arrow[dotted, from=5-3, to=3-8]
	\arrow[no head, from=5-3, to=6-4]
	\arrow[no head, from=5-3, to=6-6]
	\arrow[no head, from=5-8, to=4-7]
	\arrow[no head, from=5-8, to=4-8]
	\arrow[dotted, from=6-2, to=4-7]
	\arrow[no head, from=6-2, to=5-3]
	\arrow[dotted, from=6-4, to=4-8]
	\arrow[no head, from=6-4, to=7-3]
	\arrow[ no head, from=6-4, to=7-4]
	\arrow[curve={height=-12pt}, dotted, from=6-6, to=4-9]
	\arrow[no head, from=6-6, to=7-3]
	\arrow[ no head, from=6-6, to=7-6]
	\arrow[ no head, from=6-8, to=5-8]
	\arrow[ no head, from=7-1, to=6-2]
	\arrow[no head, from=7-1, to=8-2]
	\arrow[ no head, from=7-2, to=6-2]
	\arrow[no head, from=7-2, to=8-2]
	\arrow[curve={height=115pt}, dotted,  from=7-3, to=5-8]
	\arrow[no head, from=7-3, to=6-2]
	\arrow[ no head, from=7-3, to=8-2]
	\arrow[ no head, from=7-3, to=8-3]
	\arrow[no head, from=7-4, to=8-5]
	\arrow[ no head, from=7-5, to=6-4]
	\arrow[no head, from=7-5, to=8-5]
	\arrow[no head, from=7-6, to=8-5]
	\arrow[ no head, from=7-7, to=6-6]
	\arrow[no head, from=8-2, to=9-3]
	\arrow[ no head, from=8-3, to=9-3]
	\arrow[ no head, from=8-5, to=7-3]
	\arrow[no head, from=8-5, to=7-7]
	\arrow[no head, from=8-5, to=9-3]
\end{tikzcd}\]
    \caption{\sloppy Diagram of the subgroup lattice of the group $G=\mathbf{Z}_4\rtimes  \mathbf{Z}_4$ where the action is by inversion. We can also write $G= \langle x,y \; | \; x^4=y^4=1, x^y=x^{-1}\rangle$, the subgroup lattice of $G/K$ where $K=\langle x^2y^2\rangle$, and the centralizer lattice of $G$ and $G/K$ are shown via the boxed subgroups of the respective lattices. The map $\phi:G\rightarrow G/K$ given by taking the quotient is centralizer-respecting and thus we have an induced map $\mathcal{C}_{\phi}$ on the centralizer lattice, shown via the dashed lines.}
    \label{fig:example}
\end{figure}

\bibliographystyle{acm}
\bibliography{ref}

\end{document}